   \theoremstyle{plain}
   \newtheorem{thm}{Theorem}[section]
   \newtheorem{lem}[thm]{Lemma}
   \theoremstyle{definition}
   \theoremstyle{remark}
\title{Amalgamated free products of commutative $C^*$-algebras are residually finitedimensional}
\author{A. Korchagin}
\date{}
\begin{document}

\maketitle

\begin{abstract}
We prove that an amalgamated free product of separable commutative $C^*$-algebras is residually finitedimensional.

\end{abstract}


\section{Introduction}

Recall that a $C^*$-algebra is residually finitedimensional (RFD) if it separable and admits an embedding in a direct product of finitedimensional $C^*$-algebras. In other terms, a $C^*$-algebra $A$ is RFD if $\|a\|=\sup\|\varphi(a)\|$ for any $a\in A$, where the supremum is taken over all finitedimensional representations $\varphi$ of $A$.
In this paper we prove the RFD property for amalgamated products of commutative $C^*$-algebra.

At the end of the article we demonstrate an application of this theorem to one interesting example.


Recall that if $\phi_A:C\to A$, $\phi_B:C\to B$ are unital $*$-homomorphisms of unital $C^*$-algebras then their amalgamated free product (or simply amalgam) $A\underset{C}{\star}B$ is a $C^*$-algebra with the following properties:
\begin{enumerate}
\item
There exist $*$-homomorphisms $\varphi_A:A\to A\underset{C}{\star}B$ and $\varphi_B:B\to A\underset{C}{\star}B$ such that the square
$$
\begin{diagram}
\node{C}
  \arrow{e,l}{\phi_A}
  \arrow{s,l}{\phi_B}
  \node{A}
  \arrow{s,l}{\varphi_A}\\
\node{B}
  \arrow{e,l}{\varphi_B}
  \node{A\underset{C}{\star}B}
\end{diagram}
$$
commutes;
\item
For any $C^*$-algebra $D$ and for any commutative square
$$
\begin{diagram}
\node{C}
  \arrow{e,l}{\phi_A}
  \arrow{s,l}{\phi_B}
  \node{A}
  \arrow{s,l}{\tilde{\varphi}_A}\\
\node{B}
  \arrow{e,l}{\tilde{\varphi}_B}
  \node{D}
\end{diagram}
$$
there is a unique $*$-homomorphism $\Phi:A\underset{C}{\star}B\to D$ such that $\Phi\circ\varphi_A=\tilde{\varphi}_A$ and $\Phi\circ\varphi_B=\tilde{\varphi}_B$.
\end{enumerate}
Such $C^*$-algebra exists and is unique up to isomorphism (for information see [Ped, section 2.3]).

Some examples:
\begin{enumerate}
\item
$C(\mathbb{T})\underset{\mathbb{C}}{\star}C(\mathbb{T})\cong C^*(\mathbb{F}_2)$, where $C(\mathbb{T})$ is the algebra of continuous functions over the circle $\mathbb{T}$ and $C^*(\mathbb{F}_2)$ is the full group $C^*$-algebra of a free group on two generators.

\item $\mathbb{C}^2\underset{\mathbb{C}}{\star}\mathbb{C}^2\cong\mathbb{C}^*(p,q)$ is the universal $C^*$-algebra generated by two selfadjoint projections without any additional relations [Ped, remark 5.6].

\item $A\underset{A}{\star}A\cong A$.

\item $A\underset{\mathbb{C}}{\star}\mathbb{C}\cong A$.

\item $C^*(G)\underset{C^*(H)}{\star}C^*(T)\cong C^*(G\underset{H}{\star}T)$, where $G\underset{H}{\star}T$ is the amalgamated free product of groups [DHM, Theorem 4].
\end{enumerate}

Unlike the above examples, most of the amalgams have no explicit description and can be described only by their universality property.

\vskip 22pt

If a separable $C^*$-algebra can be embedded in a direct product (one can make it countable) of matrix algebras,
$$A\hookrightarrow\prod_k M_{n_k}(\mathbb{C})$$
then we say, that $A$ has the RFD property or simply that $A$ is an RFD algebra.

Recall that every RFD algebra has a trace, e.g. in the countable case it can be defined by the formula
$\tau=\sum_k \frac{\tau_k}{2^k}$,
where $\tau_k$ is the normalized matrix trace on $M_{n_k}(\mathbb{C})$. Non-existence of a trace often helps to disprove the RFD property, but in our case it cannot help.

Here are some basic examples:
\begin{enumerate}
\item Finite-dimensional $C^*$-algebras are all RFD.

\item If $A$ and $B$ are RFD algebras then $A\underset{\mathbb{C}}{\star}B$ is an RFD algebra [EL].

\item If $A$ and $B$ are RFD algebras then $A\oplus B$ is RDF.

\item For any compact Hausdorff space $X$, the algebra $C(X)$ of continuous functions over $X$ is RFD.

\item The $C^*$-algebra $\mathbb{K}(\mathbb{H})$ of all compact operators over a separable Hilbert space $\mathbb{H}$ is not an RFD algebra as it has no trace.

\item $M_2(\mathbb{C})\underset{\mathbb{C}^2}{\star}M_3(\mathbb{C})$, where amalgamation is constructed by embeddings $\mathbb{C}^2\hookrightarrow M_2(\mathbb{C})\; :(x,y)\mapsto diag(x,y)$ and $\mathbb{C}^2\hookrightarrow M_3(\mathbb{C})\; :(x,y)\mapsto diag(x,y,0)$, is not an RFD algebra because it has no trace [QiJu, Example 2.1].
\end{enumerate}

Here is the main result of the paper:

\begin{thm}
Let $A,B \supseteq C$  be separable commutative unital $C^*$-algebras. Then $A\underset{C}{\star} B$ is an RFD algebra.

\end{thm}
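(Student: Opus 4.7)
My plan is to use commutativity of $A$ and $B$ to exhibit the amalgam $A\underset{C}{\star} B$ as a $C(Z)$-algebra whose fibers are full free products, and then apply the Exel--Loring theorem fiber by fiber. Write $A = C(X)$, $B = C(Y)$, $C = C(Z)$ for compact metric spaces $X, Y, Z$, and let $p\colon X \to Z$, $q\colon Y \to Z$ be the continuous maps dual to $\phi_A$ and $\phi_B$. Since $A$ and $B$ are abelian, the image of $C$ in $A\underset{C}{\star} B$ commutes with $\varphi_A(A)\cup\varphi_B(B)$ and therefore lies in the center of $A\underset{C}{\star} B$. This turns $A\underset{C}{\star} B$ into a unital $C(Z)$-algebra.

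Next, for each $z\in Z$, let $m_z\subset C$ be the maximal ideal of functions vanishing at $z$, and let $I_z$ be the closed two-sided ideal of $A\underset{C}{\star} B$ generated by $m_z$ (it is an ideal because $m_z$ is central). A direct chase of the two relevant universal properties should yield
\[
(A\underset{C}{\star} B)/I_z \;\cong\; A_z \underset{\mathbb{C}}{\star} B_z,
\qquad A_z := C(p^{-1}(z)),\; B_z := C(q^{-1}(z)).
\]
Verifying this fiber description is the step that needs the most care; once it is in hand, each fiber is immediately RFD, since $A_z$ and $B_z$ are commutative (hence RFD by point evaluations) and Exel--Loring applies to their full free product.

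To finish I will invoke the standard fact that for a unital $C(Z)$-algebra $\mathcal{A}$ one has $\|a\| = \sup_{z\in Z}\|a + I_z\|$. This follows from the observation that any irreducible representation of $\mathcal{A}$ must, by Schur's lemma, send the central copy of $C(Z)$ into the scalars, hence factor through some fiber $\mathcal{A}/I_z$. Given $a\in A\underset{C}{\star} B$ and $\varepsilon>0$, I pick $z$ with $\|a+I_z\| > \|a\|-\varepsilon/2$ and a finite-dimensional representation $\pi_z$ of $A_z\underset{\mathbb{C}}{\star} B_z$ with $\|\pi_z(a+I_z)\| > \|a+I_z\|-\varepsilon/2$; composing with the quotient map yields a finite-dimensional representation of $A\underset{C}{\star} B$ whose value on $a$ is within $\varepsilon$ of $\|a\|$. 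Hence finite-dimensional representations separate points, and $A\underset{C}{\star} B$ is RFD. The only real content to verify beyond routine bookkeeping is the fiber identification above; the $C(Z)$-algebra norm formula and the Exel--Loring step are standard.
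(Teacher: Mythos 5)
Your argument is correct, but it takes a genuinely different route from the paper. The paper never forms the fibers $(A\underset{C}{\star}B)/I_z$ explicitly: instead it proves an injectivity lemma for the family of restriction maps $\mathcal{M}\to C(\hat\mu_m)\underset{C(\mu_m)}{\star}C(\check\mu_m)$ attached to a finite closed cover $\{\mu_m\}$ of $Z$ (using spectral projections of the central copy of $C(Z)$ in a faithful representation), then iterates this decomposition over covers of shrinking diameter and identifies the inductive limit of the resulting amalgams with $C(p^{-1}(\bullet))\underset{\mathbb{C}}{\star}C(q^{-1}(\bullet))$, deriving a contradiction with Exel--Loring. Your fibers $A_z\underset{\mathbb{C}}{\star}B_z$ are exactly the paper's limit algebra $\mathcal{M}_\infty$, so both proofs bottom out in the same application of [EL]; you simply reach that point directly through the $C(Z)$-algebra structure and the formula $\|a\|=\sup_{z}\|a+I_z\|$, justified as you say by Schur's lemma applied to irreducible representations. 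The one step you leave as an assertion, $(A\underset{C}{\star}B)/I_z\cong A_z\underset{\mathbb{C}}{\star}B_z$, does go through: the universal property of the amalgam gives a surjection onto $A_z\underset{\mathbb{C}}{\star}B_z$ that kills $I_z$, and conversely the composition $A\to A\underset{C}{\star}B\to (A\underset{C}{\star}B)/I_z$ kills the closed ideal of $A$ generated by $\phi_A(m_z)$ and hence factors through $A_z$ (likewise for $B$), producing the inverse map; checking both composites on generators finishes the identification. What your route buys is brevity and generality --- it needs no metrizability of $Z$, no Borel functional calculus, and no inductive-limit lemma; what the paper's route buys is a set of explicit intermediate statements (invariant subspaces, the finite decomposition of the amalgam over a cover, continuity of the amalgam under shrinking the base) that have some independent interest.
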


\section{Lemmas and Proof of the Theorem}

\begin{lem}[abundance of invariant subspaces]\label{invariant}
Let
$$
\varphi:\mathcal{M}=C(X)\underset{C(Z)}{\star}C(Y)\longrightarrow\mathbb{B}(\mathbb{H})
$$
is a unital representation on some Hilbert space $\mathbb H$.
Then, every Borel set $\mu\subseteq Z$ corresponds an invariant subspace $\mathbb{H}_{\mu}\subseteq\mathbb{H}$ with the properties:
\begin{enumerate}
\item $\mathbb{H}_{\mu}\bot\mathbb{H}_{\nu}$, whenever $\mu\cap\nu=\emptyset$;
\item if $Z=\coprod_{k=1}^{N}\mu_k$ then $\mathbb{H}=\bigoplus_{k=1}^{N}\mathbb{H}_{\mu_k}$;
\item $\mathbb{H}_\mu\subseteq\mathbb{H}_\nu$ whenever $\mu\subseteq\nu$.
\end{enumerate}
\end{lem}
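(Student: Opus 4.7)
The plan is to apply the spectral theorem (Borel functional calculus) to the restriction of $\varphi$ to the common subalgebra $C(Z)$, which is a unital representation of a commutative $C^*$-algebra. This immediately produces a projection-valued measure on $Z$, and the required subspaces will be its ranges; the only nontrivial point is to check that these projections commute not just with $\varphi(C(Z))$ but with the whole amalgam.

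Concretely, the spectral theorem extends $\varphi|_{C(Z)}$ uniquely to a PVM $E:\operatorname{Bor}(Z)\to\mathbb{B}(\mathbb{H})$ with $\varphi(f)=\int_Z f\,dE$ for $f\in C(Z)$. I set $\mathbb{H}_\mu:=E(\mu)\mathbb{H}$. Properties (1)--(3) then follow directly from the PVM axioms: $E(\mu)E(\nu)=E(\mu\cap\nu)$ yields orthogonality for disjoint $\mu,\nu$; $E(Z)=\operatorname{Id}$ together with finite additivity yields the direct-sum decomposition for a finite partition; and $\mu\subseteq\nu$ gives $E(\mu)=E(\mu)E(\nu)$, hence $\mathbb{H}_\mu\subseteq\mathbb{H}_\nu$.

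The heart of the proof is invariance. Since $\varphi(\mathcal{M})$ is generated by $\varphi(C(X))$ and $\varphi(C(Y))$, it suffices to show that $E(\mu)$ commutes with both. I apply the spectral theorem a second time, now to $\varphi|_{C(X)}$, obtaining a PVM $E_X$ on $X$; let $\pi_X:X\twoheadrightarrow Z$ be the continuous surjection Gelfand-dual to the inclusion $C(Z)\hookrightarrow C(X)$. For every $f\in C(Z)$,
$$
\int_Z f\,dE \;=\; \varphi(f) \;=\; \int_X (f\circ\pi_X)\,dE_X \;=\; \int_Z f\,d\bigl((\pi_X)_*E_X\bigr),
$$
so the uniqueness clause in the spectral theorem forces $E(\mu)=E_X(\pi_X^{-1}(\mu))$ for every Borel $\mu\subseteq Z$. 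Since $\varphi(C(X))$ is commutative, its spectral projections commute with it, so $E(\mu)$ commutes with $\varphi(C(X))$. The symmetric argument with $Y$ gives commutation with $\varphi(C(Y))$, and invariance follows.

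The only substantive obstacle is the identification $E(\mu)=E_X(\pi_X^{-1}(\mu))$, which is not deep but is the one step that uses more than formal manipulation of PVMs: it relies on the uniqueness part of the Borel functional calculus applied to $\varphi|_{C(Z)}$ viewed as arising from two different ambient commutative algebras. Everything else is routine PVM bookkeeping.
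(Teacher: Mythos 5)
Your proof is correct, and it reaches the same subspaces as the paper ($\mathbb{H}_\mu$ is the range of the spectral projection of $\varphi|_{C(Z)}$ associated to $\mu$; the paper writes this as $\mathrm{Im}\,\widehat\varphi(\chi_\mu)$ for the Borel extension $\widehat\varphi$ of $\varphi|_{C(Z)}$), but the invariance step is argued differently. The paper invokes the fact that $C(Z)$ is central in the amalgam $\mathcal{M}$ and then transfers centrality to $\widehat\varphi(\chi_\mu)$ by approximating $\chi_\mu$ pointwise by continuous functions and passing to WOT limits; you instead never mention centrality of $C(Z)$ in $\mathcal{M}$ and show directly that $E(\mu)$ commutes with each generating subalgebra, by identifying $E=(\pi_X)_*E_X=(\pi_Y)_*E_Y$ via the uniqueness clause of the spectral theorem (regularity of the PVMs is automatic since the spaces are compact metric) and then using that the spectral projections of a representation of a commutative $C^*$-algebra lie in its bicommutant, hence in its commutant. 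Both arguments rest on the same two facts --- that $\varphi(C(X))$ and $\varphi(C(Y))$ generate $\varphi(\mathcal{M})$, and that the Borel functional calculus lands in the generated von Neumann algebra --- but your version has the advantage of avoiding the paper's literal claim that one can choose a single sequence $f_k\in C(Z)$ with $f_k\to\chi_\mu$ pointwise, which is only true for Baire-class-one sets and for a general Borel $\mu$ requires either transfinite iteration or exactly the bicommutant argument you use; the paper's version has the advantage of making explicit the structural reason the lemma holds, namely that the amalgamated subalgebra is central. Either way the properties (1)--(3) are routine consequences of $E(\mu)E(\nu)=E(\mu\cap\nu)$ and finite additivity, as you say.
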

\begin{proof}
In [Pir, $\S$7.3$-\S$7.4] for every Hausdorff compact $Z$ and representation $\varphi$ commutative triangle is constructed:
$$
\begin{diagram}
  \node{C(Z)}
    \arrow{e,l}{\varphi}
    \arrow{s,l}{p}
    \node{\mathbb{B}(\mathbb{H})}\\
  \node{\mathcal{B}(Z)}
    \arrow{ne,r}{\widehat{\varphi}}
\end{diagram}
$$
where $\mathcal{B}(Z)$ is algebra of all bounded complex-valued Borel functions, $\widehat\varphi$ is continuous with respect to $\omega_0 - WOT$-topology, where
$WOT$ - is the weak topology on $\mathbb{B}(\mathbb{H})$ and
$\omega_0$ is the weak-measure topology, which is defined by the collection of semi-norms $\|f\|_\mu=|\int_Z f d\mu|$ parametrized by regular Borel measure $\mu$ with bounded variation.

[Remark: $\mathcal{B}(Z)$ is involutive, but not even a Banach algebra]

It is also known that for bounded sequences $\omega_0$-topology is equivalent to the point-wise convergerce topology.

Set
$$\mathbb{H}_\mu=Im\widehat\varphi(\chi_\mu)$$
where $\chi_\mu$ is a characteristic function of $\mu\subseteq Z$. Properties 1)-3) easily follow from similar properties of characteristic functions. To prove that $\mathbb{H}_\mu$ is invariant subspace, let us construct sequence of continuous functions $\{f_k\}$ such that
$$f_k\xrightarrow{\text{point-wise}}\chi_\mu.$$
Then
$$\varphi(f_k)\xrightarrow{\text{WOT}}\widehat\varphi(\chi_\mu).$$
As $f_k\in C(Z)$ lie in the center of the amalgam $\mathcal{M}$ [Thom], so $\varphi(f_k)$ commute with all $\varphi(\mathcal{M})$. Finally, for arbitrary $\omega\in\varphi(\mathcal{M})$ we have
$$\omega \widehat\varphi(\chi_\mu)=\omega\text{lim-}WOT \varphi(f_k)=\text{lim-}WOT \varphi(f_k)\omega=\widehat\varphi(\chi_\mu)\omega.$$
It proves invariance of $\mathbb{H}_\mu$.
\end{proof}


\begin{lem}[topological]\label{topological}
Let $K$ be a metric compact space and $\nu_k\subseteq K$, $k\in\mathbb N$, its compact subsets with the property that for every $n\in\mathbb{N}$ one has $$\nu_{n+1}\subseteq \nu_n.$$
Set $\Gamma=\bigcap_{n\in\mathbb{N}} \nu_n$.
Then for every $\varepsilon>0$, there is $N\in\mathbb N$ such that for every $n>N$
$$\nu_n\subseteq\mathcal{O}_{\varepsilon}(\Gamma)$$
i.e. $\varepsilon$- neighbourhood of $\Gamma$ contains every $\nu_n$ with $n>N$

\end{lem}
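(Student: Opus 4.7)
The plan is a short compactness argument by contradiction; I do not anticipate a real obstacle, since the statement reduces to sequential compactness of metric compacts together with the nesting hypothesis and closedness of the $\nu_m$.

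First I would negate the conclusion: suppose there exist $\varepsilon>0$ and an increasing sequence of indices $n_k$ with $\nu_{n_k}\not\subseteq\mathcal{O}_{\varepsilon}(\Gamma)$, and for each $k$ choose a witness $x_{n_k}\in\nu_{n_k}$ with $d(x_{n_k},\Gamma)\geq\varepsilon$. Using that $K$ is a metric compact (hence sequentially compact), I would pass to a further subsequence along which $x_{n_k}\to x$ for some $x\in K$.

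The main step would then be to show $x\in\Gamma$. For any fixed $m$, the nesting $\nu_{n+1}\subseteq\nu_n$ yields $x_{n_k}\in\nu_m$ as soon as $n_k\geq m$; closedness of $\nu_m$ in $K$ then forces $x\in\nu_m$, and intersecting over $m$ gives $x\in\bigcap_m \nu_m=\Gamma$. On the other hand the function $y\mapsto d(y,\Gamma)$ is continuous and satisfies $d(x_{n_k},\Gamma)\geq\varepsilon$ for every $k$, so in the limit $d(x,\Gamma)\geq\varepsilon>0$, contradicting $x\in\Gamma$. A clean reformulation of the same argument is to apply the finite intersection property to the nested closed family $\nu_n\setminus\mathcal{O}_{\varepsilon}(\Gamma)$, whose hypothetical intersection would sit inside $\Gamma\setminus\mathcal{O}_{\varepsilon}(\Gamma)=\emptyset$.
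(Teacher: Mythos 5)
Your proof is correct and complete. The paper itself offers no argument for this lemma (it is dismissed as ``an easy exercise''), and your compactness argument --- choosing witnesses $x_{n_k}\in\nu_{n_k}$ with $d(x_{n_k},\Gamma)\ge\varepsilon$, extracting a convergent subsequence, and using the nesting plus closedness of each $\nu_m$ to land the limit in $\Gamma$ while continuity of $d(\cdot,\Gamma)$ keeps it at distance $\ge\varepsilon$ --- is exactly the standard proof the author presumably has in mind; the finite-intersection-property reformulation you give is an equally valid one-line version. The only point worth a remark is the degenerate case $\Gamma=\emptyset$: this can occur only if some $\nu_n$ is already empty (a nested family of nonempty compacts has nonempty intersection), in which case the conclusion is trivial, so $d(\cdot,\Gamma)$ is well defined whenever your argument actually needs it.
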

\begin{proof}
The proof is an easy exercise.
\end{proof}

%
%

Recall that $\mathcal{M}=C(X)\underset{C(Z)}{\star} C(Y).$

By Gelfand theory for commutative algebras there are natural continuous maps: $p_X:X\to Z$ and $p_Y:Y\to Z$.

For arbitrary $\nu\subseteq Z$ introduce the notation: $\hat\nu=p_X^{-1}(\nu)$ and $\check\nu=p_Y^{-1}(\nu)$.

Let $Z\supseteq\nu_1\supseteq\nu_2\supseteq...$ be compacts with $diam (\nu_n)\to 0$ that implies  $\bigcap_n \nu_n=\bullet$, where $\bullet$ denotes a point. Set
$$\mathcal{M}_n=C(\hat\nu_n)\underset{C(\nu_n)}{\star}C(\check\nu_n)$$ $$\mathcal{M}_\infty=C(\hat\bullet)\underset{C(\bullet)}{\star}C(\check\bullet).$$
Then, for nested compacts, we can construct the chains
$$C(Z)\to C(\nu_1)\to C(\nu_2)\to ... \to C(\bullet)$$
$$C(X)\to C(\hat\nu_1)\to C(\hat\nu_2)\to ... \to C(\hat\bullet)$$
$$C(Y)\to C(\check\nu_1)\to C(\check\nu_2)\to ... \to C(\check\bullet).$$
Gelfand theory can describe subalgebras of commutative algebras
$$C(Z)=\{f\in C(X):\;f|_{p_X^{-1}(*)}=\text{const}\;\forall *\in Z\}.$$
 Due to this characterization, we can lift these homomorphisms to amalgams (homomorphisms are admissible on common subalgebras $C(\nu_n)$ i.e. form necessary commutative triangles):
$$\mathcal{M}\xrightarrow{\alpha_0}\mathcal{M}_1\xrightarrow{\alpha_1}\mathcal{M}_2\xrightarrow{\alpha_2}...\to\mathcal{M}_\infty$$
\begin{lem}[main]\label{main}

One has $$\underrightarrow{\lim}\mathcal{M}_n=\mathcal{M}_\infty.$$
\end{lem}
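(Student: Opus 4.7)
The plan is to verify the universal property of the direct limit for $\mathcal{M}_\infty$ by exhibiting mutually inverse $*$-homomorphisms $\Phi\colon\underrightarrow{\lim}\mathcal{M}_n\to\mathcal{M}_\infty$ and $\Psi\colon\mathcal{M}_\infty\to\underrightarrow{\lim}\mathcal{M}_n$. At its heart, this is the categorical fact that pushouts commute with filtered colimits; the actual content lives in translating this into the $C^*$-algebra setting and controlling norms on the three commutative systems.

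As a preliminary, I would establish that
$$\underrightarrow{\lim}_n C(\nu_n)=C(\bullet),\qquad \underrightarrow{\lim}_n C(\hat\nu_n)=C(\hat\bullet),\qquad \underrightarrow{\lim}_n C(\check\nu_n)=C(\check\bullet).$$
The connecting maps are restrictions, surjective by Tietze since each $\nu_{n+1}\subseteq\nu_n$ is closed in a metric compact, so the image of the algebraic limit in $C(\bullet)$ is already all of $C(\bullet)$. The nontrivial point is that the $C^*$-seminorm $\|[f]\|=\lim_n\|f|_{\nu_n}\|_\infty$ on the algebraic limit coincides with $\|f|_\bullet\|_\infty$. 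This is where Lemma~\ref{topological} does the essential work: given $\varepsilon>0$ and a modulus of uniform continuity $\delta$ for $f$, the lemma yields $N$ with $\nu_n\subseteq\mathcal{O}_\delta(\bullet)$ for $n>N$, which forces $\|f|_{\nu_n}\|_\infty\le\|f|_\bullet\|_\infty+\varepsilon$, and the reverse inequality is trivial. The analogous argument on $X$ and $Y$ (with $\hat\nu_n$ and $\check\nu_n$, also nested metric compacts intersecting in $\hat\bullet$ and $\check\bullet$ respectively) gives the other two identifications.

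With these identifications in hand, $\Phi$ is built by applying the universal property of $\mathcal{M}_n$ to the commuting pair $C(\hat\nu_n)\twoheadrightarrow C(\hat\bullet)\hookrightarrow\mathcal{M}_\infty$ and $C(\check\nu_n)\twoheadrightarrow C(\check\bullet)\hookrightarrow\mathcal{M}_\infty$ (they agree on $C(\nu_n)$ by construction), then assembling the resulting maps $\mathcal{M}_n\to\mathcal{M}_\infty$ into one out of the direct limit; compatibility with the $\alpha_n$ is automatic. Conversely, to build $\Psi$, I would push the composites $C(\hat\nu_n)\to\mathcal{M}_n\to\underrightarrow{\lim}\mathcal{M}_k$ through the universal property of $C(\hat\bullet)=\underrightarrow{\lim}C(\hat\nu_n)$, and symmetrically for $\check\bullet$. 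The two resulting maps $C(\hat\bullet),C(\check\bullet)\to\underrightarrow{\lim}\mathcal{M}_k$ agree on $C(\bullet)$ because at each finite stage they already agree inside $\mathcal{M}_n$ on the image of $C(\nu_n)$; so the universal property of $\mathcal{M}_\infty$ produces $\Psi$. The identities $\Phi\circ\Psi=\mathrm{id}$ and $\Psi\circ\Phi=\mathrm{id}$ then follow from uniqueness in both universal properties.

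The main obstacle, and essentially the only real content, is the norm equality in the preliminary step: in the $C^*$-category a direct limit carries a seminorm defined as a limit of norms, and without the topological lemma one would only know the trivial inequality $\lim_n\|f|_{\nu_n}\|_\infty\ge\|f|_\bullet\|_\infty$, leaving open the possibility that the direct limit is strictly larger than $C(\bullet)$. Everything following this identification is a clean diagram chase with universal properties.
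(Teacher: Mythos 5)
Your proposal is correct and follows essentially the same route as the paper: both arguments hinge on Tietze extensions together with Lemma \ref{topological} to control the sup-norms on the shrinking compacts $\nu_n$, $\hat\nu_n$, $\check\nu_n$, and both then assemble mutually inverse homomorphisms between $\underrightarrow{\lim}\mathcal{M}_n$ and $\mathcal{M}_\infty$ via the universal properties. The only cosmetic difference is that you first identify the direct limits of the three commutative systems and then invoke commutation of pushouts with colimits, whereas the paper defines the inverse map directly on the generating subalgebras $C(\hat\bullet)$ and $C(\check\bullet)$ by choosing extensions.
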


\begin{proof}
One can check (using nice commutation properties of our chains) that $\alpha_n$ induce a well-defined homomorphism
$$\underrightarrow{\lim}\mathcal{M}_n\xrightarrow{\alpha_\infty}\mathcal{M}_\infty.$$
Let us construct a homomorphism $\gamma_\infty:\mathcal{M}_\infty\to \underrightarrow{\lim}\mathcal{M}_n$.
As $C(\hat\bullet)$ and $C(\check\bullet)$ generate $\mathcal{M}_\infty$ [Thom], so $\gamma_\infty$ could be define only on them.

For $\omega\in C(\hat\bullet)$ (similarly for $C(\check\bullet)$) set
$$\gamma_\infty(\omega)=\{\Omega|_{\nu_1},\Omega|_{\nu_2},...\}$$
where $\Omega$ is an arbitrary extension of $\omega$ by Titze-Urysohn theorem.

The map $\gamma_\infty$ is well-defined, as for another extension $\Sigma$ of $\omega$, we have
$$(\Omega-\Sigma)|_{\hat\bullet}=0.$$
 As $\Omega-\Sigma$ is uniformly continuous on compact $Z$, so by Lemma \ref{topological} for any $\varepsilon>0$ there is $n\in\mathbb{N}$ such that
$$\|(\Omega-\Sigma)|_{\hat\nu_n}\|\leq \varepsilon.$$
This means that in $\underrightarrow{\lim}\mathcal{M}_n$ one has the equality $$\{\Omega|_{\nu_1},\Omega|_{\nu_2},...\}=\{\Sigma|_{\nu_1},\Sigma|_{\nu_2},...\}.$$
It easy to check that $\gamma_\infty$ is a homomorphism (\`{a} la product of admissible sequences is admissible for product...). As $\gamma_\infty$ is unital on $C(\hat\bullet)$ (and on its twin $C(\check\bullet)$) and $C(\bullet)=\mathbb{C}$, so we can extend it to $\mathcal{M}_\infty$. As $\gamma_\infty\circ\varphi_\infty=\text{id}$ and $\varphi_\infty$ is surjective, so $\varphi_\infty$ is an isomorphism.
\end{proof}
We remark that reader can find this lemma in more general terms in [Ped, Proposition 4.12]

Let $\{\mu_1,...,\mu_N\}$ be a finite covering of $Z$ by compact sets. Set $$P_{\mu_n}\mathcal{M}=C(\hat\mu_n)\underset{C(\mu_n)}{\star}C(\check\mu_n).$$
(the meaning of this notation will be come clear later)

As $\mu_n\subseteq Z$, so we can construct a homomorphism $\gamma_n$
$$\gamma_n:\mathcal{M}\to P_{\mu_n}\mathcal{M}$$
which on $x\in C(X)$ is defined by formula
$$x\mapsto x|_{\mu_n},$$
has similar definition on $C(Y)$ and extends to $\mathcal{M}$.

\begin{lem}[decomposition of an amalgam]\label{IV}

The map
$$\gamma=\prod_{m=1}^N\gamma_m:\mathcal{M}\to \prod_{m=1}^NP_{\mu_m}\mathcal{M}$$
is injective.
\end{lem}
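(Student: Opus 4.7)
The plan is to fix a faithful unital representation $\varphi:\mathcal{M}\to\mathbb{B}(\mathbb{H})$ (for instance the universal one) and exhibit a factorization $\varphi=\Psi\circ\gamma$; injectivity of $\gamma$ then follows from injectivity of $\varphi$. To build $\Psi$, I would first pass from the cover $\{\mu_m\}$ to an honest Borel partition by setting $\lambda_1=\mu_1$ and $\lambda_m=\mu_m\setminus\bigcup_{k<m}\mu_k$. Each $\lambda_m\subseteq\mu_m$ is Borel (closed minus closed) and $Z=\coprod_m\lambda_m$, so by property (2) of Lemma \ref{invariant} we get an orthogonal decomposition $\mathbb{H}=\bigoplus_{m=1}^N\mathbb{H}_{\lambda_m}$ into $\varphi(\mathcal{M})$-invariant subspaces, hence a decomposition $\varphi=\bigoplus_m\varphi_m$ with $\varphi_m$ a $*$-representation of $\mathcal{M}$ on $\mathbb{H}_{\lambda_m}$.

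The core step is to show that each $\varphi_m$ factors through $\gamma_m:\mathcal{M}\to P_{\mu_m}\mathcal{M}$. It suffices to check that any $f\in C(X)$ with $f|_{\hat{\mu}_m}=0$ satisfies $\varphi_m(f)=0$, and analogously for $g\in C(Y)$ vanishing on $\check{\mu}_m$. For this I would use the Borel extensions $\widehat{\varphi}_X:\mathcal{B}(X)\to\mathbb{B}(\mathbb{H})$ and $\widehat{\varphi}:\mathcal{B}(Z)\to\mathbb{B}(\mathbb{H})$ from the construction cited in Lemma \ref{invariant}. Since both $\widehat{\varphi}_X\circ p_X^{*}$ and $\widehat{\varphi}$ extend the same map $\varphi|_{C(Z)}$ and are continuous in the $\omega_0$-WOT topology, uniqueness of the Borel extension forces $\widehat{\varphi}_X(\chi_{\hat\nu})=\widehat{\varphi}(\chi_\nu)$ for every Borel $\nu\subseteq Z$. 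Because $\hat{\lambda}_m\subseteq\hat{\mu}_m$, the pointwise identity $f\cdot\chi_{\hat{\lambda}_m}=0$ holds in $\mathcal{B}(X)$, and applying the $*$-homomorphism $\widehat{\varphi}_X$ yields $\varphi(f)\cdot P_{\mathbb{H}_{\lambda_m}}=0$, i.e.\ $\varphi_m(f)=0$.

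Having this vanishing on both sides, $\varphi_m|_{C(X)}$ descends to $C(\hat{\mu}_m)$ and $\varphi_m|_{C(Y)}$ descends to $C(\check{\mu}_m)$. The two descended maps agree on $C(\mu_m)$ because the original maps already agreed on $C(Z)$, so the universal property of the amalgamated free product $P_{\mu_m}\mathcal{M}=C(\hat{\mu}_m)\underset{C(\mu_m)}{\star}C(\check{\mu}_m)$ produces a unique $\tilde\varphi_m:P_{\mu_m}\mathcal{M}\to\mathbb{B}(\mathbb{H}_{\lambda_m})$ with $\varphi_m=\tilde\varphi_m\circ\gamma_m$. Taking $\Psi=\bigoplus_m\tilde\varphi_m:\prod_m P_{\mu_m}\mathcal{M}\to\mathbb{B}(\mathbb{H})$ gives $\varphi=\Psi\circ\gamma$, and injectivity of $\gamma$ is immediate.

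The step I expect to require the most care is the compatibility $\widehat{\varphi}_X(\chi_{\hat{\nu}})=\widehat{\varphi}(\chi_\nu)$: one has to confirm that the Borel functional calculus of Pirkovskii behaves naturally under the pullback $p_X^{*}$, which amounts to checking uniqueness of the $\omega_0$-WOT-continuous extension or, equivalently, approximating $\chi_\nu$ by a bounded pointwise-convergent sequence in $C(Z)$ and transporting the sequence to $C(X)$ via $p_X^{*}$. Once this compatibility is in hand, the rest of the argument is essentially formal.
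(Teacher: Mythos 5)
Your proposal is correct and follows essentially the same route as the paper: both pass to a faithful representation, disjointify the cover $\{\mu_m\}$ into Borel pieces so that Lemma \ref{invariant} yields an orthogonal decomposition of $\mathbb{H}$ into invariant subspaces, and identify the compression of $\varphi$ to each piece with a representation that factors through $\gamma_m$. The only difference is packaging: you obtain the factorization $\varphi=\Psi\circ\gamma$ from the universal property of $P_{\mu_m}\mathcal{M}$, whereas the paper proves the norm identity $\|\omega\|=\max_m\|\gamma_m(\omega)\|$ via an explicit isomorphism $\varphi_m(\mathcal{M})\cong P_{\mu_m}\mathcal{M}$; the compatibility $\widehat{\varphi}_X(\chi_{\hat{\nu}})=\widehat{\varphi}(\chi_{\nu})$ that you single out as the delicate point is used (implicitly) in the paper as well, in the identity $P_{\mathbb{H}_{\mu_m}}\varphi(\hat{x})=\widehat{\varphi}(\hat{x}\chi_{\mu_m})$.
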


\begin{proof}
To prove we have to check the equality $\|\omega\|=\max_m\|\gamma_m(\omega)\|$ for arbitrary $\omega\in\mathcal{M}$. By elementary properties of *-homomorphisms we have
$$\max_m\|\gamma_m(\omega)\|\leq\|\omega\|$$
By Gelfand-Naimark  theorem we can construct a faithful representation
$$\mathcal{M}\hookrightarrow\mathbb{B}(\mathcal{H})$$
By Lemma \ref{invariant} we can restrict representation on $\mathbb{H}_m$. Let $\varphi_m=P_{\mathbb{H}_{\mu_m}}\varphi$
$$\mathcal{M}\xrightarrow{\varphi_m}\mathbb{B}(\mathbb{H}_m).$$
As $Z=\bigcup_{m=1}^N\mu_m$ we obtain by Lemma \ref{invariant} that
$$\mathbb{H}\subseteq\mathbb{H}_{\mu_1}+...+\mathbb{H}_{\mu_N}$$
$$\|\omega\|\leq\max_m\|\varphi_m(\omega)\|$$
because we can easily find disjoint Borel sets $\tilde\mu_m$ such that $\tilde\mu_m\subseteq\mu_m$ and $Z=\coprod_{m=1}^N\tilde\mu_m$. As $\mathbb{H}_{\tilde\mu_m}$ are orthogonal by Lemma \ref{invariant}, so by properties of block-diagonal operators one has $\|\varphi(\omega)\|=\max_m\|P_{\mathbb{H}_{\tilde\mu_m}}\varphi(\omega)\|$
As $\tilde\mu_m\subseteq\mu_m$ we have
$$\max_m\|P_{\mathbb{H}_{\tilde\mu_m}}\varphi(\omega)\|\leq\max_m \|\varphi_m(\omega)\|.$$
There is an isomorphism:
$$\varphi_m(\mathcal{M})\cong P_{\mu_m}\mathcal{M}$$
which is defined for $x\in C(\hat\mu_m)\subseteq P_{\mu_m}\mathcal{M}$ by formula
$$\delta:x\mapsto P_{\mathbb{H}_{\mu_m}}\varphi(\hat{x}).$$
where $\hat{x}\in C(X)\subseteq\mathcal{B}(X)$ is an arbitrary continuous extension of $x$ to X. It is well-defined as $P_{\mathbb{H}_{\mu_m}}\varphi(\hat{x})=\widehat{\varphi}(\hat{x}\chi_{\mu_m})$, has similar definition for $y\in C(\check\mu_m)$ and admits an extension to $P_{\mu_m}\mathcal{M}$ (about $\widehat{\varphi}$ see [Pir] and Lemma \ref{invariant}).
Define for $P_{\mathbb{H}_{\mu_m}}\varphi(x)\in \varphi_m(C(X))$ map
$$\Delta: P_{\mathbb{H}_{\mu_m}}\varphi(x)\mapsto x|_{\mu_m}.$$
 Representation $\varphi$ is injective then $P_{\mathbb{H}_{\mu_m}}\varphi(x-y)=0$ implies $x|_{\mu_m}=y|_{\mu_m}$; so $\Delta$ is a well-defined homomorphism and admits extension to $\varphi_m(\mathcal{M})$. As $\Delta$ is surjective and $\Delta\circ\delta=id$, so $\Delta$ is isomorphism. Finally, we have $\Delta\circ\varphi_m(\omega)=\gamma_m(\omega)$ and $\|\omega\|\leq\max_m\|\gamma_m(\omega)\|.$
\end{proof}

\begin{lem}[RFD norms]\label{V}

Let $A$, $B$ be $C^*$-algebras, $a\in A$. Set
$$\|a\|_{RFD}=\sup_{{\phi\text{  finite}}\atop{{\text{dimemsional}}\atop{\text{representation}}}}\|\phi(a)\|.$$
If $\varphi:A\rightarrow B$ is a $*$-homomorphism then $\|\varphi(a)\|_{RFD}\le\|a\|_{RFD}$.

\end{lem}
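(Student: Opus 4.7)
The plan is very short because the statement is essentially a definitional consequence. The key observation I would lead with is that a $*$-homomorphism composed with a finite-dimensional representation is again a finite-dimensional representation. That is, if $\psi:B\to M_n(\mathbb{C})$ is any finite-dimensional representation of $B$, then $\psi\circ\varphi:A\to M_n(\mathbb{C})$ is a finite-dimensional representation of $A$ of the same dimension.

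Given this, I would fix $a\in A$ and proceed as follows. For an arbitrary finite-dimensional representation $\psi$ of $B$, I would estimate
\[
\|\psi(\varphi(a))\|=\|(\psi\circ\varphi)(a)\|\leq \sup_{\phi}\|\phi(a)\|=\|a\|_{RFD},
\]
where the supremum is over finite-dimensional representations $\phi$ of $A$; the inequality uses precisely that $\psi\circ\varphi$ is one of the representations over which that supremum is taken. Taking the supremum of the left-hand side over all finite-dimensional representations $\psi$ of $B$ yields
\[
\|\varphi(a)\|_{RFD}=\sup_{\psi}\|\psi(\varphi(a))\|\leq \|a\|_{RFD},
\]
which is the desired conclusion.

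There is no real obstacle here; the only point one might flag is the (standard) convention that ``finite-dimensional representation'' means a $*$-homomorphism into some $M_n(\mathbb{C})$ (equivalently, $\mathbb{B}(\mathbb{H})$ for a finite-dimensional Hilbert space $\mathbb{H}$), so that composition with $\varphi$ manifestly lands in the same class. No assumption of unitality, injectivity, or separability on $A$, $B$, or $\varphi$ is required, and the argument works verbatim whether or not the supremum is attained. The lemma will then be used downstream to transport RFD-norm estimates across the structural homomorphisms (such as the $\gamma_m$ of Lemma \ref{IV}) built in the preceding lemmas.
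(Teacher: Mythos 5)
Your proof is correct and fills in exactly the standard argument the paper leaves implicit (its proof is simply ``Obvious''): composing any finite-dimensional representation of $B$ with $\varphi$ yields a finite-dimensional representation of $A$, so the supremum defining $\|\varphi(a)\|_{RFD}$ runs over a subfamily of the representations defining $\|a\|_{RFD}$. Nothing is missing and no different route is taken.
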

\begin{proof}
Obvious.
\end{proof}

\begin{proof}[Proof (of the theorem)]

By Gelfand-Naimark theorem $A=C(X),B=C(Y),C=C(Z)$, where $X$, $Y$ and $Z$ are metric compacts.
Suppose that $\mathcal{M}$ is not an RFD algebra. Then for some $\varepsilon>0$ and some $0\neq\omega\in\mathcal{M}$ we have
$$\|\omega\|\geq(1+\varepsilon)\|\omega\|_{RFD}.$$
Let $\{\mu_1,...,\mu_N\}$ is a finite covering $Z$ by compact sets such that $diam(\mu_n)\leq\frac{diam(Z)}{2}$.

By Lemmas \ref{IV}, \ref{V}, we have
$$\|\omega\|=\max_n\|\gamma_n(\omega)\|,$$
$$\|\omega\|_{RFD}\geq\max_n\|\gamma_n(\omega)\|_{RFD}.$$
Then for some $n_0$, we have
$$\|\omega\|=\|\gamma_{n_0}(\omega)\|\geq(1+\varepsilon)\|\gamma_{n_0}(\omega)\|_{RFD}$$
Let $\nu_1=\mu_{n_0}$, $\alpha_0=\gamma_{n_0}$ and $\mathcal{M}_1=C(\hat\nu_1)\underset{C(\nu_1)}{\star}C(\check\nu_1)$. Now, let us apply this decomposition method to $\nu_1$ in place of $Z$, namely, let us find compacts $\mu_k$ (with corresponding homomorphisms) such that $\nu_1=\bigcup_k \mu_k$ and $diam(\mu_k)\le\frac{diam(\nu_1)}{2}$. Now we can find $n_1$ such that
$$\|\omega\|=\|(\gamma_{n_1}\circ\alpha_0)(\omega)\|\geq(1+\varepsilon)\|(\gamma_{n_1}\circ\alpha_0)(\omega)\|_{RFD}.$$
Let $\nu_2=\mu_{n_1}$, $\alpha_1=\gamma_{n_1}$ and $\mathcal{M}_2=C(\hat\nu_2)\underset{C(\nu_2)}{\star} C(\check\nu_2)$. Then let us apply this decomposition method to $\nu_2$, etc.

Inductively we have
$$\mathcal{M}\xrightarrow{\alpha_0}\mathcal{M}_1\xrightarrow{\alpha_1}\mathcal{M}_2\xrightarrow{\alpha_2}...$$
Let $\Phi_n=\alpha_n\circ ...\circ\alpha_0$. Apply Lemma \ref{main} to this sequence. As $diam(\nu_n)\rightarrow 0$, so $\underrightarrow{\lim}\mathcal{M}_n\cong\mathcal{M}_\infty=C(\hat\bullet)\underset{C(\bullet)}{\star}C(\check\bullet)$. For $\Phi_\infty(\omega)=\{\Phi_0(\omega),\Phi_1(\omega),...\}\in\underrightarrow{\lim}\mathcal{M}_n$ we have
$$0\neq\|\omega\|=\|\Phi_\infty(\omega)\|=\liminf_n\|\Phi_n(\omega)\|\geq(1+\varepsilon)\liminf_n\|\Phi_n(\omega)\|_{RFD} \geq(1+\varepsilon)\|\Phi_\infty(\omega)\|_{RFD}.$$
The last inequality follows from the existence of the canonical $*$-homomorphism $\mathcal{M}_n\longrightarrow\mathcal{M}_\infty:\;\Phi_n(\omega)\mapsto\Phi_\infty(\omega)$ and from Lemma \ref{V}. As $C(\bullet)$ is one-dimensional, so $\mathcal{M}_\infty$ is RFD (as a free product of RFD algebras [EL]). But this contradicts our inequality $\|\Phi_\infty(\omega)\|\geq(1+\varepsilon)\|\Phi_\infty(\omega)\|_{RFD}$, so our supposition was wrong.
\end{proof}

\section{Concluding remarks}

{\bf An example (of application of the theorem)}

In [ManCho], the authors consider the universal $C^*$-algebras $\mathcal{A}_\lambda=\mathbb{C}^*(U,V)$ generated by two unitaries $U$ and $V$ with the property
$$\|Re(U+V)\|\le\lambda, \qquad \lambda\in[0,2].$$
They show that $$\mathcal{A}_0\cong C(\mathbb{T})\underset{C(\mathbb{I})}{\star} C(\mathbb{T})$$ where $C(\mathbb{T})$ is the algebra of continuous functions over the unit circle $\mathbb{T}$ and $C(\mathbb{I})$ is the algebra of continuous functions over the segment $[-1,1]$. We consider $\mathbb T$ and $\mathbb I$ as subsets on the complex plane. The map $\mathbb{T}\longrightarrow\mathbb{I},\; z\mapsto Re(z)$, defines $*$-homomorphisms of the algebras in the natural way. RFD property for $\mathcal{A}_0$ follows from our theorem. For $\mathcal{A}_2=\mathbb{C}^*(\mathbb{F}_2)$, RFD property was proved by Choi in [Choi]. For $\mathcal{A}_\lambda$, where $\lambda\in (0,2)$, RFD property is an open question.

{\bf Remarks}

Construction of commutative triangle:
$$\begin{diagram}
  \node{C(X)}
    \arrow{s}
    \arrow{e,l}{\varphi}
    \node{\mathbb{B}(\mathbb{H})}\\
  \node{\mathcal{B}(X)}
    \arrow{ne,r}{\widehat\varphi}
\end{diagram}$$
reader can find in [Pir]. In [Thom] we can find a proof of the fact that finite sums of finite products of elements of algebras, which define an amalgam, are dense in it. V.M.Manuilov was the first, who considered algebras $\mathcal{A}_\lambda$. Paper [ChMan] was motivation for the main theorem. Paper [QiJu] is very interesting in this theme.

We also remark that using this method one can prove RFD property for an amalgamated product of many commutative algebras, so is for algebras of the form: $$A_1\underset{C}{\star}A_2\underset{C}{\star}...\underset{C}{\star}A_n$$
where $C$ is a subalgebra of commutative algebras $A_1$, $A_2$...$A_n$: our consideration about projections, embedding, block-diagonal operators and decompositions does not depend on quantity of amalgamated commutative algebras.

{\bf Acknowledgment}
The author is greatful to V.M.Manuilov for grand help and support, the Bogolubov Laboratory of Geometrical Methods Mathematical Physics for comfortable atmosphere and my Muse Mary for inspiration.

\end{document}